\documentclass[12pt]{article}
\usepackage{amsmath}
\usepackage{amssymb}
\usepackage{amsthm}

\title{Generators of the homological Goldman Lie algebra}
\author{Nariya Kawazumi\thanks{partially supported by the Grant-in-Aid for
Scientific Research (A) (No.20244003), (A) (No.22244005)
and (B) (No.24340010) from the
Japan Society for Promotion of Sciences}, 
Yusuke Kuno,
Kazuki Toda
}
\date{}

\theoremstyle{definition}
\newtheorem{Def}{Definition}[section]
\newtheorem{rem}[Def]{Remark}

\theoremstyle{plain}
\newtheorem{Prop}[Def]{Propositon}
\newtheorem{Lem}[Def]{Lemma}
\newtheorem{Thm}[Def]{Theorem}

%\theoremstyle{nonumberplain}
%\theoremsymbol{ }
%\newtheorem{proof}{proof}

\def\mbN{\mathbb{N}}
\def\mbZ{\mathbb{Z}}
\def\mbQ{\mathbb{Q}}

\begin{document}
\maketitle

%%%%%%%%%%%%%%%%%%%%%%%%%%%%%%%%%%%%%%%%%%%%
\begin{abstract}
We determine the minimal number of generators of
the homological Goldman Lie algebra of a surface
consisting of elements of the first homology group of the surface.
\end{abstract} 

\section{Introduction}
Let $\Sigma$ be a compact connected oriented surface of genus $g\geq 1$.
The first homology group $H=H_1(\Sigma ,\mbZ )$ of $\Sigma$
is equipped with a skew-symmetric bilinear form
$\langle -,-\rangle:H\times H\to \mathbb{Z}$ called the {\it intersection form}. 
We denote by $\mbQ H$ the $\mbQ$-vector space with basis the set $H$;
\[
\mbQ H :=\left\{ \sum _{i=1}^n c_i[x_i] \mid n \in \mbN ,c_i \in \mbQ ,x_i \in H \right\} ,
\]
where $[-]:H\rightarrow \mbQ H$ is the embedding as basis.
Let $[-,-]:\mbQ H\times \mbQ H\rightarrow \mbQ H$ be a bilinear form defined
by $[[x],[y]]:=\langle x,y\rangle [x+y]$ for $x,y\in H$.
It is easy to see that this bilinear form is skew-symmetric and satisfies
the Jacobi identity.
We call the $\mbQ$-vector space $\mbQ H$ equipped with the Lie bracket $[-,-]$
the {\it homological Goldman Lie algebra of $\Sigma$}.
This Lie algebra was originally introduced by Goldman \cite{G} p.295--p.297.

The purpose of this paper is to study generators of $\mbQ H$.
In the previous paper \cite{T}, the third-named author determined the ideals of $\mbQ H$.
In particular, it follows that the abelianization of $\mbQ H$ is finite dimensional
if and only if the intersection form $\langle -,-\rangle$ is non-degenerate;
otherwise $\mbQ H$ is not finitely generated as a Lie algebra.
On the other hand, the abelianization of $\mathbb{Z}H$ is not finitely generated 
even in the non-degenerate case.  

Hereafter we assume that $\langle -,-\rangle$ is non-degenerate, i.e.,
$\Sigma$ is closed or has one boundary component.
Then there exists a $\mbZ$-basis $\{ A_i, B_i\} _{i=1}^g$ of $H$, called
a {\it symplectic basis} of $H$, such that
\[
\langle A_i,B_j\rangle =\delta _{ij},\quad
\langle A_i,A_j\rangle = \langle B_i,B_j\rangle = 0
\]
for all $i,j\in \{1,\ldots ,g\}$, 
where $\delta _{ij}$ is Kronecker's delta.  
Throughout this paper, we fix a symplectic basis of $H$.

We will show that if $\langle -,-\rangle$ is non-degenerate,
then the Lie algebra $\mbQ H$ is finitely generated. Moreover, we
determine the minimal number of generators of $\mbQ H$
consisting of elements of $H$.
Our main theorem is the following. 
\begin{Thm}
\label{main}
Suppose the intersection form $\langle -,-\rangle$ is non-degenerate.
There is a subset $S$ of $H$ such that $\{[s] \mid s\in S\}$
generates $\mbQ H$ as a Lie algebra and $\# S=2g+2$.
In particular, the Lie algebra $\mbQ H$ is finitely generated.
Moreover, if $S$ is a subset of $H$ and
$\{[s] \mid s\in S\}$ generates $\mbQ H$ as a Lie algebra,
we have $\#S \geq 2g+2$. 
\end{Thm}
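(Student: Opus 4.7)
The plan splits into an upper and a lower bound, both exploiting the $H$-grading $\mbQ H = \bigoplus_{x\in H}\mbQ[x]$ induced by the bracket formula $[[x],[y]]=\langle x,y\rangle[x+y]$.

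\emph{Upper bound.} I would exhibit the explicit set
\[
S = \{0\}\cup\{A_i,B_i:1\leq i\leq g\}\cup\{C\},\qquad C := -\sum_{i=1}^{g}(A_i+B_i),
\]
of cardinality $2g+2$, and show that $\{[s]:s\in S\}$ generates $\mbQ H$. The generator $[0]$ accounts for the degree-$0$ piece. Iterated brackets within the $i$-th symplectic pair produce $[mA_i+nB_i]$ for all $m,n\geq 0$ with $(m,n)\neq(0,0)$. The ``mixer'' $C$ satisfies $\langle C,A_j\rangle=1$ and $\langle C,B_j\rangle=-1$ for every $j$, so $[C,A_j]$ and $[C,B_j]$ are nonzero and yield elements with nontrivial support in every symplectic pair. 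Through a systematic sequence of brackets---first among generators, then among already-reached elements---one reaches each $[-A_i]$ and $[-B_i]$, after which all of $\mbQ H$ is generated easily.

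\emph{Lower bound.} Suppose $\{[s]:s\in S\}$ generates $\mbQ H$. Every iterated bracket of these generators is a scalar multiple of $[u]$, where $u=\sum_j s_{i_j}$ is a nonnegative integer combination of elements of $S$ (possibly with repetition) and the scalar is a product of intersection numbers labelling the nodes of the bracket tree. At the root of any tree whose support sum $u$ equals $0$, the corresponding factor is $\langle w,-w\rangle=0$, which kills the contribution; hence $[0]$ can be obtained only when $0\in S$. For any $v\in H\setminus\{0\}$, extracting the $[v]$-component of the generated subalgebra forces $v$ to be a nonnegative integer combination of elements of $S\setminus\{0\}$. Thus $S\setminus\{0\}$ is a positive spanning set of $H\cong\mbZ^{2g}$, and any such set has at least $2g+1$ elements: a $2g$-element candidate is either $\mbQ$-linearly dependent (hence fails to $\mbQ$-span), or a $\mbQ$-basis whose dual basis furnishes a linear functional strictly positive on it, which prevents the positive span from reaching any negative element. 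Therefore $\#S\geq 2g+2$.

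The main obstacle is the upper bound's reachability step. Although it is immediate that every element of $H$ is a nonnegative integer combination of $S$, turning such a combination $v=\sum_s n_s s$ into a valid bracket sequence requires nonzero intersection at each addition, and many natural decompositions collapse---for instance $\langle A_i,A_j\rangle=0$ for $i\neq j$, and $\langle A_i+B_i,C\rangle=0$. Overcoming this demands a judicious choice of intermediate, already-reached elements (rather than merely raw generators), and is the technical heart of the argument.
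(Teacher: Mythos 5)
Your lower bound is correct and is essentially the paper's own argument (Lemma \ref{monoid} and Theorem \ref{Lower}): the $H$-grading confines the generated subalgebra to $\mbQ M$ for $M$ the submonoid generated by $S$, the vanishing $\langle w,-w\rangle=0$ forces $0\in S$, and a set of $2g$ nonzero elements cannot generate $H$ as a monoid. Your $(2g+2)$-element candidate set for the upper bound is also exactly the paper's (Theorem \ref{Upper}).

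The gap is in the upper bound: the reachability argument, which you yourself flag as ``the technical heart,'' is not supplied, and it is precisely where the content of Theorem \ref{Upper} lies. Moreover, the one concrete reachability claim you do make is false: iterated brackets of $[A_i]$ and $[B_i]$ alone do \emph{not} produce $[mA_i+nB_i]$ for all $m,n\geq 0$ with $(m,n)\neq(0,0)$. For instance $[2A_i]$ is unreachable from these two generators, since any bracket word whose leaves sum to $2A_i$ must have exactly two leaves, both equal to $A_i$, and $[[A_i],[A_i]]=0$. This failure is symptomatic of the real difficulty: to increase the $A_k$-coefficient one must first borrow a $B_k$ (so that each intersection number along the way is nonzero) and afterwards cancel it using $[-B_k]$, which is not among your generators and must itself be manufactured from the mixer $C$; similarly, coupling distinct symplectic pairs is obstructed by $\langle A_i,A_j\rangle=0$. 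The paper resolves all of this with Lemma \ref{first} (from $[X+A_k]$ together with $[\pm A_k],[\pm B_k]$ one reaches every $[X+aA_k+bB_k]$ with $(a,b)\neq(0,0)$), the auxiliary generating set of Lemma \ref{second} (which includes the elements $[A_i+A_j]$ precisely to couple distinct pairs), and the explicit $ad$-computations of Theorem \ref{Upper} that extract $[-A_i]$, $[-B_i]$ and $[A_i+A_j]$ from your set. Without some version of these steps the upper bound is not proved.
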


\begin{rem}
As we see in Lemma \ref{second} and Theorem \ref{Upper},
$\mbQ H$ is generated by primitive elements of $H$ and $[0]$.
By Theorem \ref{Lower}, if $S$ is a subset of $H$ and
$\{[s] \mid s\in S\}$ generates $\mbQ H$, then $0\in S$.
Therefore, the quotient Lie algebra $\mbQ H/\mathbb{Q} [0]$
is generated by $2g+1$ primitive elements of $H$, and $2g+1$
is the minimal number of generators of $\mbQ H/\mathbb{Q} [0]$
consisting of elements of $H$. This result reminds us a result
of Humphries \cite{H} that the mapping class group of
a closed oriented surface is generated by $2g+1$ Dehn twists,
and $2g+1$ is the minimal number of generators
consisting of Dehn twists.
Although we do not see any
relationship between Humphries's result and our result,
this coincidence seems interesting.

On the other hand,
there is another Lie algebra associated to the oriented surface $\Sigma$
called the {\it Goldman Lie algebra}. This Lie algebra is denoted by
$\mathbb{Q}\hat{\pi}$, where $\hat{\pi}$ is the set of
homotopy classes of oriented loops on $\Sigma$. For more details, see Goldman \cite{G}.
The natural projection $\hat{\pi}\to H$ induces a surjective Lie
algebra homomorphisms $\mathbb{Q}\hat{\pi} \to \mbQ H$ and
$\mathbb{Q}\hat{\pi}/\mathbb{Q} 1\to \mbQ H/\mathbb{Q} [0]$.
Here $1$ is the homotopy class of a constant loop.
Simple closed curves on $\Sigma$ are analogous to primitive elements of $H$.
However, in contrast with the above property of $\mbQ H$, the quotient Lie algebra
$\mathbb{Q}\hat{\pi}/\mathbb{Q}1$ is not generated by simple closed curves.
This follows from the fact that the kernel of the Turaev cobracket \cite{Tu}
is a proper Lie subalgebra of $\mathbb{Q}\hat{\pi}/\mathbb{Q}1$, and
simple closed curves are in the kernel of the Turaev cobracket.
It is not known whether the Goldman Lie algebra $\mathbb{Q}\hat{\pi}$
for a surface which is closed or has one boundary component
is finitely generated or not.
\end{rem}

The rest of this paper is devoted to the proof of Theorem \ref{main}.

\section{Upper bound of the number of generators}

We define the $\mbQ$-linear map $ad:\mbQ H\rightarrow {\rm End}(\mbQ H)$ by $ad(X)(Y):=[X,Y]$.

\begin{Lem}\label{first}
Fix $k\in \{1,\ldots ,g\}$. 
Let $\mathfrak{g}$ be a Lie subalgebra of $\mbQ H$ such that $[\pm A_k],[\pm B_k]\in \mathfrak{g}$. 
If $X\in H$ satisfies $\langle A_k,X\rangle =0$,
$\langle B_k,X\rangle=0$ and $[X+A_k]\in \mathfrak{g}$, 
then we have $[X+aA_k+bB_k]\in \mathfrak{g}$ for all $(a,b)\in \mbZ ^2\setminus \{(0,0)\}$. 
\end{Lem}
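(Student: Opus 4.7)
The plan is to consider the elements $Y_{a,b} := [X + aA_k + bB_k]$ and to show that each one with $(a,b)\neq(0,0)$ lies in $\mathfrak{g}$ by repeatedly bracketing with $[\pm A_k]$ and $[\pm B_k]$. Since $X$ is orthogonal to both $A_k$ and $B_k$, a direct computation gives $\langle A_k, X + aA_k + bB_k \rangle = b$ and $\langle B_k, X + aA_k + bB_k \rangle = -a$, so the four adjoint actions take the clean form $[[\pm A_k], Y_{a,b}] = \pm b\, Y_{a\pm 1, b}$ and $[[\pm B_k], Y_{a,b}] = \mp a\, Y_{a, b\pm 1}$. Thus $ad([\pm A_k])$ shifts the $a$-index with coefficient $\pm b$, while $ad([\pm B_k])$ shifts the $b$-index with coefficient $\mp a$; the subtlety is that the former vanishes when $b=0$ and the latter when $a=0$.

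I would then navigate the lattice $\mbZ^{2}\setminus\{(0,0)\}$ in three waves. Starting from $Y_{1,0}\in\mathfrak{g}$, I apply $ad([\pm B_k])$ repeatedly; the coefficient is $\mp 1\neq 0$ throughout (because the $a$-index stays equal to $1$), so I obtain $Y_{1,b}$ for every $b\in\mbZ$. Next, for each fixed $b\neq 0$, the coefficient $\pm b$ appearing in $ad([\pm A_k])$ is nonzero, so iterating starting from $Y_{1,b}$ produces $Y_{a,b}$ for all $a\in\mbZ$, in particular $Y_{0,b}$. At this point every $Y_{a,b}$ with $b\neq 0$ lies in $\mathfrak{g}$, along with $Y_{1,0}$. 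To fill in the remaining points $Y_{a,0}$ with $a\neq 0,1$, I start from $Y_{a,1}$ (already in $\mathfrak{g}$) and apply $ad([-B_k])$, whose coefficient is $a\neq 0$.

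The only issue requiring attention is respecting the two vanishing strata $a=0$ and $b=0$, but since the traversal above always moves off the relevant axis before returning to it, this is not a real obstruction. All coefficients appearing along the way are nonzero integers, so one can divide back at each step to recover each $Y_{a,b}$ itself, and the proof reduces to checking the four bracket formulas above and verifying that the union of the three waves exhausts $\mbZ^{2}\setminus\{(0,0)\}$.
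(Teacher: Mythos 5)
Your proposal is correct: the four bracket formulas are right, every coefficient along your traversal is a nonzero integer, and the three waves do cover $\mbZ^2\setminus\{(0,0)\}$. This is essentially the paper's own argument — iterated adjoint actions by $[\pm A_k]$ and $[\pm B_k]$ walking the lattice while avoiding the vanishing strata — differing only in the order of traversal (the paper first fills the two axes from $Y_{1,0}$ and then the quadrants, whereas you fill the column $a=1$, then all rows with $b\neq 0$, then the $b=0$ axis).
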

\begin{proof}
First of all, we have $[X+aA_k]\in \mathfrak{g}$ for all $a\in \mbZ \setminus \{0\}$
since
\begin{align*}
ad([-B_k])ad([A_k])^{a-1}ad([B_k])([X+A_k])&=-a[X+a A_k] \quad {\rm if\ } a>0, {\rm \ and} \\
ad([B_k])ad([-A_k])^{-a+1}ad([-B_k])([X+A_k])&=-a[X+aA_k] \quad {\rm if\ } a<0.
\end{align*}
Similarly, we have $[X+bB_k]\in \mathfrak{g}$ for all $b\in \mbZ \setminus \{0\}$ since
\begin{align*}
ad([-A_k])ad([B_k])^b([X+A_k])&=(-1)^{b+1}b[X+bB_k] \quad {\rm if\ } b>0, {\rm \ and} \\ 
ad([-A_k]ad([-B_k])^{-b}([X+A_k])&=-b[X+bB_k] \quad {\rm if\ } b<0.
\end{align*}
Therefore we have $[X+aA_k+bB_k]\in \mathfrak{g}$ if $ab=0$ and $(a,b)\neq (0,0)$.

Suppose $ab\neq 0$. By what we have just proved, we have $[X+aA_k]\in \mathfrak{g}$. 
Then we have $[X+aA_k+bB_k]\in \mathfrak{g}$ since 
\begin{align*}
ad([B_k])^b([X+aA_k])&=(-a)^b[X+aA_k+bB_k] \quad {\rm if\ } b>0, {\rm \ and} \\
ad([-B_k])^{-b}([X+aA_k])&=a^{-b}[X+aA_k+bB_k]  \quad {\rm if\ } b<0.
\end{align*}
This completes the proof.
\end{proof}

\begin{Lem}\label{second}
The set
\[
\{ [\pm A_i](1\leq i\leq g),[\pm B_i](1\leq i\leq g),[A_i+A_j](1\leq i<j\leq g),[0] \}
\]
generates $\mbQ H$ as a Lie algebra. In particular, $\mbQ H$ is finitely generated. 
\end{Lem}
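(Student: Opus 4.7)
The plan is to show by induction on $n := |\mathrm{supp}(x)|$ that $[x]$ lies in the Lie subalgebra $\mathfrak{g}$ of $\mbQ H$ generated by the given finite set, for every $x = \sum_{i=1}^g(a_i A_i + b_i B_i) \in H$; here $\mathrm{supp}(x) := \{i : (a_i,b_i) \neq (0,0)\}$. Since $\{[x] : x \in H\}$ spans $\mbQ H$, this yields $\mathfrak{g} = \mbQ H$ and in particular finite generation.

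The cases $n \leq 1$ are handled directly: $n = 0$ gives $x = 0$ and $[0] \in \mathfrak{g}$ by hypothesis, while for $n = 1$ with $x = aA_k + bB_k$ a single application of Lemma \ref{first} with $X = 0$ and seed $[A_k]$ suffices. For $n = 2$, with $\mathrm{supp}(x) = \{i,j\}$, $i<j$, and $x = aA_i + bB_i + cA_j + dB_j$, I would start from the seed $[A_i + A_j] \in \mathfrak{g}$ and apply Lemma \ref{first} twice: first with index $j$ and $X = A_i$ to reach $[A_i + cA_j + dB_j]$, and then with index $i$ and $X = cA_j + dB_j$ to reach $[x]$. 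The orthogonality hypotheses of Lemma \ref{first} are automatic because distinct symplectic pairs are mutually orthogonal.

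The inductive step ($n \geq 3$) is the heart of the argument. The idea is to write $x = Y_1 + Y_2$ with $|\mathrm{supp}(Y_1)|, |\mathrm{supp}(Y_2)| < n$ and $\langle Y_1, Y_2 \rangle \neq 0$, so that the induction hypothesis gives $[Y_1], [Y_2] \in \mathfrak{g}$ and the identity $[[Y_1],[Y_2]] = \langle Y_1,Y_2\rangle[x]$ produces a nonzero scalar multiple of $[x]$. To construct the decomposition, pick three distinct indices $i_1,i_2,i_3 \in \mathrm{supp}(x)$ (possible since $n \geq 3$) and set
\[
Y_2 := (a_{i_1} A_{i_1} + b_{i_1} B_{i_1}) + \bigl((a_{i_3}-\alpha) A_{i_3} + (b_{i_3}-\beta) B_{i_3}\bigr), \qquad Y_1 := x - Y_2,
\]
where $(\alpha,\beta) \in \{(0,1),(1,0)\}$ is chosen so that $\alpha b_{i_3} - \beta a_{i_3} \neq 0$; this is possible since $(a_{i_3},b_{i_3}) \neq (0,0)$. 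Then $i_1 \notin \mathrm{supp}(Y_1)$ and $i_2 \notin \mathrm{supp}(Y_2)$, so $|\mathrm{supp}(Y_1)| \leq n-1$ and $|\mathrm{supp}(Y_2)| \leq 2 \leq n-1$; moreover only the $i_3$-pair contributes to $\langle Y_1, Y_2\rangle$, which evaluates to $\alpha b_{i_3} - \beta a_{i_3} \neq 0$.

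The main obstacle is arranging the support and intersection conditions simultaneously in the inductive step: a third index $i_3$ in $\mathrm{supp}(x)$ supplies exactly the slack needed to break symplectic orthogonality inside one pair without spoiling the support structure. The case $n = 2$ cannot be handled by the same decomposition trick, because any splitting that strictly shrinks the supports leaves the two parts supported in disjoint symplectic pairs and hence orthogonal; this is precisely why the seeds $[A_i + A_j]$ ($i < j$) must be included in the generating set.
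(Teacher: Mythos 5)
Your proof is correct, and it follows a genuinely different route from the paper's in the main induction. The paper first proves (its Claim 1) that $[A_{i_1}+\cdots +A_{i_n}]\in \mathfrak{g}$ for any increasing sequence of indices, via the explicit identity $ad([-B_{i_1}])ad([-A_{i_1}])ad([A_{i_1}+A_{i_n}])ad([B_{i_1}])([A_{i_1}+\cdots +A_{i_{n-1}}])=[A_{i_1}+\cdots +A_{i_n}]$, and then (its Claim 2) deforms this element one symplectic pair at a time by Lemma \ref{first}, turning each summand $A_{i_m}$ into $a_mA_{i_m}+b_mB_{i_m}$. You instead induct on the support size of $x$: your base cases $n\le 2$ use Lemma \ref{first} with seeds $[A_k]$ and $[A_i+A_j]$ (and the hypotheses $[\pm A_k],[\pm B_k]\in\mathfrak{g}$ and the orthogonality conditions do hold as you assert), while for $n\ge 3$ you split $x=Y_1+Y_2$ into summands of strictly smaller support with $\langle Y_1,Y_2\rangle=\alpha b_{i_3}-\beta a_{i_3}\neq 0$ for a suitable $(\alpha,\beta)\in\{(0,1),(1,0)\}$, so a single bracket $[[Y_1],[Y_2]]=\langle Y_1,Y_2\rangle [x]$ closes the induction; I verified that only the $i_3$-pair contributes to the pairing and that both supports drop as claimed. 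Your argument dispenses with the paper's Claim 1 entirely and isolates cleanly why the generators $[A_i+A_j]$ are unavoidable (at support size $2$ any support-shrinking splitting lands in orthogonal symplectic pairs), whereas the paper's argument is more computational and yields explicit $ad$-words for each $[x]$ at the cost of a longer chain of applications of Lemma \ref{first}.
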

\begin{proof}
Let $\mathfrak{g}$ be the Lie subalgebra generated by the above set. 

\noindent {\bf Claim 1.}
{\it For any integer $n>0$ and indices $i_1,\ldots,i_n$ with
$1\leq i_1<\cdots <i_n\leq g$, we have $[A_{i_1}+\cdots +A_{i_n}]\in \mathfrak{g}$. }

We prove Claim 1 by induction on $n$. 
If $n=1$ or $n=2$, the claim follows from the assumption of the lemma. 
Suppose $n>2$ and let $i_1,\ldots ,i_n$ be indices with $1\leq i_1 <\cdots <i_n \leq g$.
By the inductive assumption,
we have $[A_{i_1}+\cdots +A_{i_{n-1}}]\in \mathfrak{g}$. 
Since
\begin{eqnarray*}
ad([-B_{i_1}])ad([-A_{i_1}])ad([A_{i_1}+A_{i_n}])ad([B_{i_1}])([A_{i_1}+\cdots +A_{i_{n-1}}]) \\
=[A_{i_1}+\cdots +A_{i_n}],
\end{eqnarray*}
we have $[A_{i_1}+\cdots +A_{i_n}]\in \mathfrak{g}$.
This proves Claim 1.

Let $x\in H$ be an arbitrary element.
If $x=0$, we have $[x]\in \mathfrak{g}$ by the assumption of the lemma. 
Suppose $x\neq 0$. Since $\{A_i,B_i\}_{i=1}^g$ is a $\mathbb{Z}$-basis of $H$, we can write
$x=a_1A_{i_1}+b_1B_{i_1}+\cdots +a_nA_{i_n}+b_nB_{i_n}$ with
$1\leq i_1<\cdots <i_n\leq g$ and $(a_1,b_1),\ldots ,(a_n,b_n)\in \mbZ ^2\setminus \{(0,0)\}$.

\noindent {\bf Claim 2.}
{\it We have $[a_1A_{i_1}+b_1B_{i_1}+\cdots
+a_mA_{i_m}+b_mB_{i_m}+A_{i_{m+1}}+\cdots +A_{i_n}]\in \mathfrak{g}$ for all $m=1,\ldots ,n$.}

We prove Claim 2 by induction on $m$. 
Suppose $m=1$. We have $[A_{i_2}+\cdots +A_{i_n}]\in \mathfrak{g}$ by Claim 1.
Applying Lemma \ref{first} to $k=i_1$ and $X=A_{i_2}+\cdots +A_{i_n}$,
we have $[a_1A_{i_1}+b_1B_{i_1}+A_{i_2}+\cdots +A_{i_n}]\in \mathfrak{g}$.
This proves the case $m=1$.

Suppose $m>1$. By the inductive assumption,
we have $[a_1A_{i_1}+b_1B_{i_1}+\cdots +a_{m-1}A_{i_{m-1}}+b_{m-1}B_{i_{m-1}}+A_{i_{m+1}}+\cdots +A_{i_n}]\in \mathfrak{g}$. Applying Lemma \ref{first} to $k=i_m$ and
$X=a_1A_{i_1}+b_1B_{i_1}+\cdots +a_{m-1}A_{i_{m-1}}+b_{m-1}B_{i_{m-1}}+A_{i_{m+1}}+\cdots +A_{i_n}$,
we obtain $[a_1A_{i_1}+b_1B_{i_1}+\cdots +a_mA_{i_m}+b_mB_{i_m}+A_{i_{m+1}}+\cdots +A_{i_n}]\in \mathfrak{g}$. This proves Claim 2.

Applying Claim 2 to $m=n$, we have $[x]\in \mathfrak{g}$.
Since $\{ [x] \mid x\in H\}$ is a $\mathbb{Q}$-basis of $\mbQ H$, we obtain
$\mathfrak{g}=\mbQ H$. This completes the proof.
\end{proof}

Now we give generators of $\mbQ H$ consisting of $2g+2$ elements of $H$.
\begin{Thm}\label{Upper}
The set 
\[
\{ [A_i](1\leq i\leq g),[B_i](1\leq i\leq g),[-A_1-\cdots -A_g-B_1-\cdots -B_g],[0]\}
\]
generates $\mbQ H$ as a Lie algebra. 
\end{Thm}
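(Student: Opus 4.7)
Let $\mathfrak{g}$ denote the Lie subalgebra generated by the proposed set, and write $C = -A_1 - \cdots - A_g - B_1 - \cdots - B_g$. The plan is to show that each generator from the set in Lemma \ref{second} lies in $\mathfrak{g}$ and then invoke that lemma.

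First, I would produce $[-A_1 - \cdots - A_g]$ and $[-B_1 - \cdots - B_g]$ in $\mathfrak{g}$ by successively applying $ad([B_k])$ for $k = 1, \ldots, g$ (respectively $ad([A_k])$) to $[C]$; each bracket has pairing $\pm 1$ because the relevant coefficient of the running element is $\pm 1$ throughout. Bracketing $[-A_1 - \cdots - A_g]$ iteratively first with each $[B_k]$ and then with each $[A_k]$ yields $[B_1 + \cdots + B_g] \in \mathfrak{g}$, and symmetrically $[A_1 + \cdots + A_g] \in \mathfrak{g}$.

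Next, the cross bracket of $[-A_1 - \cdots - A_g]$ with $[B_1 + \cdots + B_g]$ has pairing $-g \neq 0$, giving $[-(A_1 + \cdots + A_g) + (B_1 + \cdots + B_g)] \in \mathfrak{g}$. Applying $ad([A_k])$ for each $k \neq i$ (pairing $1$ supplied by the surviving $+B_k$-coefficient) cancels every $-A_k$-term except $-A_i$, producing $[-A_i + B_1 + \cdots + B_g] \in \mathfrak{g}$; bracketing this with $[-B_1 - \cdots - B_g]$ (pairing $1$) yields $[-A_i]$. Exchanging the roles of $A$ and $B$ gives $[-B_i]$.

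The main obstacle is $[A_i + A_j]$ for $i < j$, because Lemma \ref{first} cannot supply it (its conclusion excludes $(a,b) = (0,0)$) and $[[A_i], [A_j]] = 0$. My resolution is to first build $[A_i + A_j - B_1 - \cdots - B_g]$ directly from $[C]$: apply $ad([A_k])$ once for each $k \neq i, j$ (pairing $-1$ from the surviving $-B_k$-coefficient), then $ad([A_i])$ twice and $ad([A_j])$ twice (each pairing $-1$, because the $B_i$- and $B_j$-coefficients remain $-1$ throughout). Finally, bracketing with $[B_1 + \cdots + B_g]$ (pairing $2$) produces $[A_i + A_j] \in \mathfrak{g}$. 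Combined with $[\pm A_i], [\pm B_i], [0] \in \mathfrak{g}$, Lemma \ref{second} then gives $\mathfrak{g} = \mbQ H$.
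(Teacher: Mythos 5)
Your proof is correct and follows essentially the same strategy as the paper: reduce to Lemma \ref{second} by producing $[\pm A_i]$, $[\pm B_i]$, and $[A_i+A_j]$ from $[C]$ via explicit iterated $ad$-operators, each with nonzero intersection pairing. The particular operator words you use differ slightly from the paper's (and a couple of your stated pairings are off by a sign, which is immaterial since only nonvanishing is needed), but the argument is the same in substance.
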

\begin{proof}
Let $\mathfrak{g}$ be the Lie subalgebra generated by the above set. 
Set $X:=[-A_1-\cdots -A_g-B_1-\cdots -B_g]$. 

First of all, we have $[A_1+\cdots +A_g],[B_1+\cdots +B_g]\in \mathfrak{g}$ since
\begin{align*}
ad([B_1])\cdots ad([B_g])ad([A_1])^2\cdots ad([A_g])^2(X)&=(-1)^g[A_1+\cdots +A_g], \mbox{ and} \\
ad([A_1])\cdots ad([A_g])ad([B_1])^2\cdots ad([B_g])^2(X)&=[B_1+\cdots +B_g].
\end{align*}
Next, we have $[-A_i],[-B_i]\in \mathfrak{g}$ for all $i=1,\ldots ,g$ since
\begin{eqnarray*}
&ad([B_1+\cdots +B_g])ad([A_1])\cdots ad([A_{i-1}])ad([A_{i+1}])\cdots ad([A_g])(X)\\
&=(-1)^{g-1}[-A_i], \mbox{ and} \\
&ad([A_1+\cdots +A_g])ad([B_1])\cdots ad([B_{i-1}])ad([B_{i+1}])\cdots ad([B_g])(X)\\
&=-[-B_i]. 
\end{eqnarray*}
Finally, we have $[A_i+A_j]\in \mathfrak{g}$ for $1\leq i<j \leq g$ since
\[
ad([B_1+\cdots +B_g])ad([A_1+\cdots +A_g])ad([A_i])ad([A_j])(X)=2g[A_i+A_j].
\]
Now the assertion follows from Lemma \ref{second}. 
\end{proof}

\section{Lower bound of the number of generators}
\begin{Lem}\label{monoid}
Let $S$ be a subset of $H$, $\mathfrak{g}$ the Lie subalgebra generated by $\{ [s] \mid s\in S\}$,
and $M$ the submonoid in $H$ generated by $S$, i.e., 
\[
M=\{ s_1+\cdots +s_n \in H \mid n\in \mbN ,s_i\in S\}.
\]
Then, we have $\mathfrak{g}\subset \mbQ M$. 
\end{Lem}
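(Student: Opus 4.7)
The plan is to show directly that $\mbQ M$, viewed as the $\mbQ$-linear span of $\{[m] \mid m \in M\} \subset \mbQ H$, is itself a Lie subalgebra of $\mbQ H$ that already contains every generator of $\mathfrak{g}$. Since $\mathfrak{g}$ is by definition the smallest such subalgebra, the inclusion $\mathfrak{g} \subset \mbQ M$ will follow at once.

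The first step is essentially a tautology: every $s \in S$ is a one-term sum from $S$, so $S \subset M$ and hence $\{[s] \mid s \in S\} \subset \mbQ M$.

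The main (and only nontrivial) step is to verify that $\mbQ M$ is closed under the Lie bracket. By $\mbQ$-bilinearity of $[-,-]$, it suffices to check this on the spanning set $\{[m] \mid m \in M\}$. For $m_1, m_2 \in M$, the defining formula of the bracket gives
\[
[[m_1],[m_2]] = \langle m_1, m_2\rangle [m_1+m_2].
\]
Because $M$ is a submonoid of the additive group $H$, the sum $m_1 + m_2$ again lies in $M$, so the right-hand side lies in $\mbQ M$.

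Combining these two observations, $\mbQ M$ is a Lie subalgebra of $\mbQ H$ containing $\{[s] \mid s \in S\}$, and therefore contains the Lie subalgebra $\mathfrak{g}$ generated by this set. There is no real obstacle: the statement is a bookkeeping lemma recording that the bracket formula $[[x],[y]] = \langle x,y\rangle [x+y]$ only produces elements indexed by sums of the indexing vectors, so iterated brackets of the generators can never escape the submonoid that $S$ generates in $H$.
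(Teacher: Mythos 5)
Your proof is correct, but it takes a slightly different route from the paper's. The paper argues ``from the inside out'': it uses the standard fact that $\mathfrak{g}$ is spanned as a $\mathbb{Q}$-vector space by the right-normed iterated brackets $[s_1,[s_2,[\cdots,[s_{n-1},s_n]\cdots]]]$ with $s_i\in S$, and then computes such a bracket explicitly as $\bigl(\prod_{i=1}^{n-1}\langle s_i,\sum_{j=i+1}^n s_j\rangle\bigr)[s_1+\cdots+s_n]$, which visibly lies in $\mathbb{Q}M$. You instead argue ``from the outside in'': you verify that $\mathbb{Q}M$ is itself a Lie subalgebra (closure under the bracket being immediate from $[[m_1],[m_2]]=\langle m_1,m_2\rangle[m_1+m_2]$ and $m_1+m_2\in M$) containing the generators, and invoke minimality of $\mathfrak{g}$. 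Your version avoids both the explicit product formula and the appeal to the spanning set of iterated brackets (which itself rests on the Jacobi identity), so it is marginally more self-contained; the paper's version has the side benefit of recording the exact coefficient of the iterated bracket, though that formula is not reused elsewhere. Both proofs are complete and equally short.
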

\begin{proof}
The set $\{ [s_1,[s_2,[\cdots ,[s_{n-1},s_n]\cdots ]]] \mid n\in \mbN ,s_i\in S\}$ generates $\mathfrak{g}$ as a $\mbQ$-vector space. Since
\[
[s_1,[s_2,[\cdots ,[s_{n-1},s_n]\cdots ]]]=
( \prod _{i=1}^{n-1}\langle s_i,\sum _{j=i+1}^ns_j\rangle )[s_1+\cdots +s_n]\in \mbQ M, 
\]
we obtain $\mathfrak{g} \subset \mbQ M$.
\end{proof}

\begin{Thm}\label{Lower}
Let $S$ be a subset of $H$. 
If $\{ [s] \mid s\in S\}$ generates $\mbQ H$ as a Lie algebra,
we have $0\in S$ and $\# S \geq 2g+2$. 
\end{Thm}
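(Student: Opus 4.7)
The plan is to combine two ingredients: the formula from the proof of Lemma \ref{monoid} that expresses every iterated bracket as a scalar multiple of a single basis element $[v]$, and an elementary lower bound on the number of elements needed to generate $H \cong \mbZ^{2g}$ as an additive monoid.

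First I would deduce $0 \in S$. The Lie subalgebra $\mathfrak{g}$ is the $\mbQ$-span of iterated brackets $[s_1,[s_2,[\cdots,[s_{n-1},s_n]\cdots]]]$ with $n \geq 1$ and $s_i \in S$, and by the formula in the proof of Lemma \ref{monoid} each such bracket equals a scalar multiple of $[s_1 + \cdots + s_n]$. Since $\{[v] \mid v \in H\}$ is a $\mbQ$-basis of $\mbQ H$, the assumption $[0] \in \mathfrak{g} = \mbQ H$ forces some iterated bracket to be a nonzero multiple of $[0]$. If $n \geq 2$ and $s_1 + \cdots + s_n = 0$, then the $i=1$ factor in the product of Lemma \ref{monoid} is $\langle s_1,\, s_2 + \cdots + s_n\rangle = \langle s_1,\, -s_1\rangle = 0$, so the whole bracket vanishes. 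Hence only $n=1$ with $s_1 = 0$ can contribute, forcing $0 \in S$.

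Next, Lemma \ref{monoid} together with $\mathfrak{g} = \mbQ H$ gives $M = H$, where $M$ is the submonoid of $H$ generated by $S$. Let $S' := S \setminus \{0\}$ and let $M'$ be the submonoid generated by $S'$. I claim $M' = H$ as well: any nonzero element of $H$ is a sum of elements of $S$, and dropping $0$-summands expresses it as a sum of elements of $S'$, so it lies in $M'$; moreover for any fixed $s \in S'$ we have $-s \in H \setminus \{0\} \subseteq M'$ by the previous sentence, so $0 = s + (-s) \in M'$.

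The final step is the combinatorial claim that $H$ cannot be monoid-generated by $\leq 2g$ elements. Suppose $v_1,\ldots,v_m \in H$ monoid-generate $H$ with $m \leq 2g$. Since every element of $H$ is a non-negative integer combination of the $v_i$, the $v_i$ $\mbQ$-span $H \otimes \mbQ$; this forces $m = 2g$ and $\mbQ$-linear independence. Taking a dual basis produces a linear functional $\ell \colon H \to \mbQ$ with $\ell(v_i) = 1$ for all $i$, so every element of the monoid satisfies $\ell \geq 0$. But $\ell(-v_1) = -1 < 0$, contradicting $M' = H$. Thus $\#S' \geq 2g+1$, and together with $0 \in S$ we conclude $\#S \geq 2g+2$. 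The main obstacle I anticipate is the first step: one must notice that the $i=1$ factor in the Lemma \ref{monoid} product automatically annihilates any bracket of length $\geq 2$ whose total sum is $0$, so that the $[0]$-contribution can come only from the trivial length-one brackets.
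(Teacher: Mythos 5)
Your proposal is correct and follows essentially the same route as the paper: deduce $0\in S$ from the vanishing of brackets whose summands add to $0$, use Lemma \ref{monoid} to get that $S$ generates $H$ as a monoid, and then show $\#(S\setminus\{0\})>2g$. The only cosmetic difference is in the last step, where you rule out $\#(S\setminus\{0\})=2g$ with a dual-basis linear functional $\ell$ satisfying $\ell\geq 0$ on the monoid, while the paper observes directly that the non-negative integer span of a $\mbZ$-basis is a proper submonoid of $H$.
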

\begin{proof}
Suppose $\mbQ H$ is generated by $\{ [s] \mid s\in S\}$ as a Lie algebra.
Now, we have $[\mbQ H,\mbQ H]\subset \mbQ (H\setminus \{0\})$ since $\langle x,y\rangle =0$ for
$x,y\in H$ with $x+y=0$. 
This implies that $0\in S$. 

Let $M$ be the submonoid generated by $S$. 
By Lemma \ref{monoid} we have $\mbQ H \subset \mbQ M$, thus $H\subset M$. 
Since $H\supset M$, we obtain $H=M$. 
In other words, the set $S$ generates $H$ as a monoid. 
In particular, $S\setminus \{0\}$ generates $H$ as a $\mbZ$-module. 
Since $H$ is a free $\mbZ$-module of rank $2g$, we have $\# (S\setminus \{0\})\geq 2g$. 

If $\# (S\setminus \{0\})= 2g$, $S\setminus \{0\}$ is a $\mathbb{Z}$-basis of $H$. 
Then $M=\{ \sum _{s\in S\setminus \{0\}} a_ss \mid a_s\geq 0\} \neq H$, which contradicts $M=H$. 
Hence $\# (S\setminus \{0\})> 2g$, therefore we have $\# S\geq 2g+2$. 
\end{proof}

By Theorems \ref{Upper} and \ref{Lower}, we obtain Theorem \ref{main}.

\begin{rem}
Let $R$ be a commutative ring with unit. Using $R$ instead of $\mbQ$,
we can similarly define the homological Goldman Lie algebra $RH$.
If $R$ includes $\mbQ$, the same result as Theorem \ref{main} holds for $RH$.
\end{rem}

In Theorem \ref{Lower}, we assumed $S\subset H$. 
This condition is essential. 
If we consider generators which are not necessarily
elements of $H$, we can find generators of $\mbQ H$ whose
number is less than $2g+2$.
\begin{Prop}
The Lie algebra $\mbQ H$ is generated by $g+2$ elements
$[A_1],\ldots,[A_g]$,
$[-A_1-\cdots -A_g-B_1-\cdots -B_g]+[B_1]+\cdots +[B_g]+[0]$,
and $[B_1+\cdots +B_g]$.
\end{Prop}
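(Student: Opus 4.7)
The plan is to show that the subalgebra $\mathfrak{g}$ of $\mbQ H$ generated by the listed $g+2$ elements contains $[A_i]$ and $[B_i]$ for $1\le i\le g$, together with $[C]$ (where $C:=-A_1-\cdots-A_g-B_1-\cdots-B_g$) and $[0]$, so that the conclusion $\mathfrak{g}=\mbQ H$ follows at once from Theorem \ref{Upper}. Since the $[A_i]$ are given, the real task is to extract $[C]$, each $[B_i]$, and $[0]$ from $Y:=[C]+[B_1]+\cdots+[B_g]+[0]$ and $Z:=[B_1+\cdots+B_g]$. Write $A:=A_1+\cdots+A_g$ and $B:=B_1+\cdots+B_g$.

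Assume first $g\ge 2$. To isolate $[C]$ I would use that $ad([A_k])[B_j]=0$ for $k\ne j$ and $ad([A_k])[0]=0$; a short induction then gives $ad([A_n])\cdots ad([A_1])(Y)=(-1)^n[A_1+\cdots+A_n+C]$ for $2\le n\le g$, which at $n=g$ puts $[-B]$ into $\mathfrak{g}$. Because $-B$ also pairs trivially with every $[B_j]$ and with $[0]$, we have $ad([-B])(Y)=-g[-A-2B]$, so $[-A-2B]\in\mathfrak{g}$; then $ad([B])[-A-2B]=g[C]$ yields $[C]\in\mathfrak{g}$.

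With $[C]$ in hand, $[A_j+C]=-ad([A_j])[C]$ lies in $\mathfrak{g}$. For each $i$ and a fixed $j\ne i$, iterating $ad([A_k])$ over $k\in\{1,\dots,g\}\setminus\{i,j\}$ converts $[A_j+C]$ into $\pm[A-A_i+C]=\pm[-A_i-B]$; a further $ad([B])$ produces $[-A_i]\in\mathfrak{g}$. Combined with $ad([A_j])(Y)=[A_j+B_j]-[A_j+C]$ this gives $[A_j+B_j]\in\mathfrak{g}$, and then $ad([-A_j])[A_j+B_j]=-[B_j]$ supplies every $[B_j]$. Finally $[0]=Y-[C]-\sum_j[B_j]\in\mathfrak{g}$, and Theorem \ref{Upper} concludes.

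The case $g=1$ needs a separate short argument because the induction above degenerates. Here $Z=[B_1]$ coincides with a summand of $Y$, so $ad([A_1])(Y-Z)=-[-B_1]$ puts $[-B_1]$ into $\mathfrak{g}$; then $ad(Z)(Y)=[B_1+C]=[-A_1]$ (the other summands of $Y$ are killed by $[B_1]$) gives $[-A_1]\in\mathfrak{g}$, and Lemma \ref{first} applied with $k=1$, $X=0$ supplies $[C]=[-A_1-B_1]$, after which $[0]=Y-[C]-[B_1]$ finishes. The principal obstacle is disentangling the four-fold sum in $Y$; the key realization is that $-B$ annihilates each $[B_j]$ and $[0]$ under the bracket, so $[-B]$ serves as a filter isolating the $[C]$-summand of $Y$, and the subsequent $ad([B])$ translates the output from $-A-2B$ back to $C$.
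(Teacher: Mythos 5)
Your proposal is correct and follows essentially the same route as the paper's proof: it splits into the cases $g=1$ and $g\geq 2$, filters the $[-A_1-\cdots-A_g-B_1-\cdots-B_g]$ summand out of the composite generator by iterated $ad([A_k])$ together with the generator $[B_1+\cdots+B_g]$, then recovers each $[-A_i]$, $[B_i]$ and $[0]$, and concludes via Theorem \ref{Upper}. The only differences are cosmetic: you pass through the intermediate element $[-A_1-\cdots-A_g-2(B_1+\cdots+B_g)]$ where the paper passes through $[-A_1-\cdots-A_g]$, and in the $g=1$ case you invoke Lemma \ref{first} where the paper uses a single bracket.
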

\begin{proof}
Let $\mathfrak{g}$ be the Lie subalgebra generated by $[A_1],\ldots,[A_g]$,
$X:=[-A_1-\cdots -A_g-B_1-\cdots -B_g]+[B_1]+\cdots +[B_g]+[0]$, and $Y:=[B_1+\cdots +B_g]$. 

Suppose $g=1$. Then $X=[-A_1-B_1]+[B_1]+[0]$ and $Y=[B_1]$.
We have $[-A_1]\in \mathfrak{g}$ since $[X,Y]=-[-A_1]$, and
we have $[-B_1]\in \mathfrak{g}$ since 
$[X-Y,[A_1]]=[-B_1]$. 
Then we have $[-A_1-B_1]\in \mathfrak{g}$ since $[[-A_1],[-B_1]]=[-A_1-B_1]$.
Since $[0]=X-[-A_1-B_1]-Y$ we have $[0]\in \mathfrak{g}$. 
Therefore we obtain $\{ [A_1],[B_1],[-A_1-B_1],[0]\} \subset \mathfrak{g}$,
which implies $\mathfrak{g}=\mbQ H$ by Theorem \ref{Upper}. 

Suppose $g\geq 2$. 
Then $(-1)^g[-B_1\cdots -B_g] = ad([A_1])\cdots ad([A_g])(X) \in \mathfrak{g}$. 
On the other hand, we have $-g[-A_1\cdots -A_g] = [X, Y] \in \mathfrak{g}$.
Set $Z=[-A_1\cdots -A_g-B_1\dots -B_g]$. Then $Z\in \mathfrak{g}$ since
\[
[[-A_1-\cdots -A_g],[-B_1-\cdots -B_g]]=gZ. 
\]
We have $[-A_i]\in \mathfrak{g}$ for $i=1,\ldots ,g$ since 
\[
ad(Y)ad([A_1])\cdots ad([A_{i-1}])ad([A_{i+1}])\cdots ad([A_g])(Z)=(-1)^{g-1}[-A_i], 
\]
and we have $[B_i]\in \mathfrak{g}$ for $i=1,\ldots ,g$ since 
\[
ad([-A_i])ad([A_i])(X)+Z=-[B_i] . 
\]
Finally, we have $[0]\in \mathfrak{g}$ since $[0]=X-Z-[B_1]-\cdots -[B_g]$. 
Hence, we have $\mathfrak{g}=\mbQ H$ by Theorem \ref{Upper}.
\end{proof}

\noindent \textsc{Nariya Kawazumi\\
Department of Mathematical Sciences,\\
University of Tokyo,\\
3-8-1 Komaba Meguro-ku Tokyo 153-8914 JAPAN}\\
\noindent \texttt{E-mail address: kawazumi@ms.u-tokyo.ac.jp}

\vspace{0.5cm}

\noindent \textsc{Yusuke Kuno\\
Department of Mathematics,\\
Tsuda College,\\
2-1-1, Tsuda-Machi, Kodaira-shi, Tokyo 187-8577 JAPAN}\\
\noindent \texttt{E-mail address: kunotti@tsuda.ac.jp}

\vspace{0.5cm}

\noindent \textsc{Kazuki Toda\\
Graduate School of Mathematical Sciences, \\
University of Tokyo, \\
3-8-1 Komaba Meguro-ku, Tokyo 153-8914, JAPAN}\\
\noindent \texttt{E-mail address: ktoda@ms.u-tokyo.ac.jp}

\end{document}